\providecommand{\U}[1]{\protect\rule{.1in}{.1in}}
\newtheorem{theorem}{Theorem}
\newtheorem{corollary}[theorem]{Corollary}
\newtheorem{lemma}[theorem]{Lemma}
\newtheorem{proposition}[theorem]{Proposition}
\newtheorem{remark}[theorem]{Remark}
\newenvironment{proof}[1][Proof]{\noindent\textbf{#1.} }{\hfill$\Box$\vspace{.5cm}}
\begin{document}

\title{\noindent Absolute continuity of the best Sobolev constant of a bounded domain }
\author{Grey Ercole{\small \textbf{ }}\thanks{E-mail: grey@mat.ufmg.br. The author was
supported by FAPEMIG and CNPq, Brazil.}\\{\small \textit{Departamento de Matem\'{a}tica - ICEx, Universidade Federal de
Minas Gerais,}}\\{\small \textit{Av. Ant\^{o}nio Carlos 6627, Caixa Postal 702, 30161-970, Belo
Horizonte, MG, Brazil }} }
\maketitle

\begin{abstract}
\noindent Let $\lambda_{q}:=\inf\left\{  \left\Vert \nabla u\right\Vert
_{L^{p}(\Omega)}^{p}/\left\Vert u\right\Vert _{L^{q}(\Omega)}^{p}:u\in
W_{0}^{1,p}(\Omega)\setminus\{0\}\right\}  $, where $\Omega$ is a bounded and
smooth domain of $\mathbb{R}^{N},$ $1<p<N$ and $1\leq q\leq p^{\star}%
:=\frac{Np}{N-p}.$ We prove that the function $q\mapsto\lambda_{q}$ is
absolutely continuous in the closed interval $[1,p^{\star}].$ \newline

\noindent\textbf{2000 Mathematics Subject Classification.} 46E35; 35J25; 35J70.

\noindent\textbf{Keywords:}{\small { Absolute continuity, Lipschitz
continuity, $p$-Laplacian, Rayleigh quotient, Sobolev best constants.}}

\end{abstract}

\section{Introduction.}

\noindent Let $\Omega$ be a bounded and smooth domain of Euclidean space
$\mathbb{R}^{N},$ $N\geq2,$ and let $1<p<N.$ For each $1\leq q\leq p^{\star
}:=\dfrac{Np}{N-p}$, let $\mathcal{R}_{q}:W_{0}^{1,p}(\Omega)\setminus
\{0\}\longrightarrow\mathbb{R}$ be the Rayleigh quotient associated with the
Sobolev immersion $W_{0}^{1,p}(\Omega)\hookrightarrow L^{q}(\Omega)$. That
is,
\[
\mathcal{R}_{q}(u):=\left(  {\int_{\Omega}}\left\vert \nabla u\right\vert
^{p}dx\right)  \left(  {\int_{\Omega}}\left\vert u\right\vert ^{q}dx\right)
^{-\frac{q}{p}}=\frac{\left\Vert \nabla u\right\Vert _{p}^{p}}{\left\Vert
u\right\Vert _{q}^{p}}%
\]
where $\left\Vert \cdot\right\Vert _{s}:=\left(  {\int_{\Omega}}\left\vert
\cdot\right\vert ^{s}dx\right)  ^{\frac{1}{s}}$ denotes the usual norm of
$L^{s}(\Omega).$

It is well-known that the immersion $W_{0}^{1,p}(\Omega)\hookrightarrow
L^{q}(\Omega)$ is continuous if $1\leq q\leq p^{\star}$ and compact if $1\leq
q<p^{\star}.$ Hence, there exist
\begin{equation}
\lambda_{q}:=\inf\left\{  \mathcal{R}_{q}(u):u\in W_{0}^{1,p}(\Omega
)\setminus\{0\}\right\}  ,\ 1\leq q\leq p^{\star} \label{lambdaq}%
\end{equation}
and $w_{q}\in W_{0}^{1,p}(\Omega)\setminus\{0\}$ such that
\begin{equation}
\mathcal{R}_{q}(w_{q})=\lambda_{q},\text{ \ }1\leq q<p^{\star}. \label{wq}%
\end{equation}
Since $\mathcal{R}_{q}$ is homogeneous of degree zero the extremal function
$w_{q}$ for the Rayleigh quotient can be chosen such that $\left\Vert
w_{q}\right\Vert _{q}=1.$

It is straightforward to verify that such a normalized extremal $w_{q}$ is a
weak solution of the Dirichlet problem%
\begin{equation}
\left\{
\begin{array}
[c]{rrll}%
-\Delta_{p}u & = & \lambda_{q}\left\vert u\right\vert ^{q-2}u & \text{in
}\Omega\\
u & = & 0 & \text{on }\partial\Omega,
\end{array}
\right.  \label{diricq}%
\end{equation}
for the $p$-Laplacian operator $\Delta_{p}u:=\operatorname{div}(\left\vert
\nabla u\right\vert ^{p-2}\nabla u).$ Hence, classical results imply that
$w_{q}$ can still be chosen to be positive in $\Omega$ and that $w_{q}\in
C^{1,\alpha}(\overline{\Omega})$ for some $0<\alpha<1.$

In the case $q=p$, the constant $\lambda_{p}$ is the well-known first
eigenvalue of the Dirichlet $p$-Laplacian and $w_{p}$ is the correspondent
eigenfunction $L^{p}$-normalized.

If $q=1$ the pair $(\lambda_{1},w_{1})$ is obtained from the \textit{Torsional
Creep Problem}:%
\begin{equation}
\left\{
\begin{array}
[c]{rrll}%
-\Delta_{p}u & = & 1 & \text{in }\Omega\\
u & = & 0 & \text{on }\partial\Omega.
\end{array}
\right.  \label{torsional}%
\end{equation}
In fact, if $\phi_{p}$ is the \textit{torsion function} of $\Omega,$ that is,
the solution of (\ref{torsional}), then it easy to check that the only
positive weak solution of (\ref{diricq}) with $q=1$ is $\lambda_{1}^{\frac
{1}{p-1}}\phi_{p}.$ Thus, $w_{1}=\lambda_{1}^{\frac{1}{p-1}}\phi_{p}$ and
since $\left\Vert w_{1}\right\Vert _{1}=1$ one has
\begin{equation}
\lambda_{1}=\frac{1}{\left\Vert \phi_{p}\right\Vert _{1}^{p-1}}\text{ \ and
\ }w_{1}=\frac{\phi_{p}}{\left\Vert \phi_{p}\right\Vert _{1}}.
\label{c1=torsion}%
\end{equation}
In the particular case where $\Omega=B_{R}(x_{0}),$ the ball of radius $R>0$
centered at $x_{0}\in\mathbb{R}^{N},$ the torsion function is explicitly given
by $\phi_{p}(x)=\Phi_{p}(\left\vert x-x_{0}\right\vert )$ where
\[
\Phi_{p}\left(  r\right)  :=\frac{p-1}{p}N^{-\frac{1}{p-1}}\left(  R^{\frac
{p}{p-1}}-r^{\frac{p}{p-1}}\right)  ,\text{ \ }0\leq r\leq R.
\]
Hence, for $\Omega=B_{R}(x_{0})$ one obtains%
\begin{equation}
\lambda_{1}=\left[  \frac{p+N(p-1)}{\omega_{N}(p-1)}\right]  ^{p-1}\frac
{N}{R^{(p^{\ast}-1)(N-p)}} \label{c1ball}%
\end{equation}
and%
\[
w_{1}(x)=\frac{p+N(p-1)}{p\omega_{N}R^{N}}\left(  1-\left(  \left\vert
x-x_{0}\right\vert /R\right)  ^{\frac{p}{p-1}}\right)
\]
where $\omega_{N}$ is the $N$ dimensional Lebesgue volume of the unit ball
$B_{1}(0).$ (More properties of the torsion function and some of its
applications are given in \cite{BE, Kawohl}.)

In the critical case $q=p^{\star}$ extremals for the Rayleigh quotient exist
if the domain is the whole Euclidean space $\mathbb{R}^{N}.$ In fact, in
$\mathbb{R}^{N}$ one has the \textit{Sobolev Inequality}
\begin{equation}
\mathcal{S}_{p,N}\left\Vert u\right\Vert _{L^{p^{\star}}(\mathbb{R}^{N})}%
\leq\left\Vert \nabla u\right\Vert _{L^{p}(\mathbb{R}^{N})}\text{ \ for all
}u\in W^{1,p}(\mathbb{R}^{N}) \label{Sobineq}%
\end{equation}
where (see \cite{Auban, Talenti}):
\begin{equation}
\mathcal{S}_{p,N}:=\sqrt{\pi}N^{\frac{1}{p}}\left(  \frac{N-p}{p-1}\right)
^{\frac{p-1}{p}}\left(  \frac{\Gamma(N/p)\Gamma(1+N-N/p)}{\Gamma
(1+N/2)\Gamma(N)}\right)  ^{\frac{1}{N}} \label{SNp}%
\end{equation}
and $\Gamma(t)=\int_{0}^{\infty}s^{t-1}e^{-s}ds$ is the Gamma Function. The
\textit{Sobolev constant} $\mathcal{S}_{p,N}$ is optimal and achieved,
necessarily, by radially symmetric functions of the form (see \cite{Talenti}%
):
\begin{equation}
w(x)=a\left(  1+b\left\vert x-x_{0}\right\vert ^{\frac{p}{p-1}}\right)
^{-\frac{N-p}{p}} \label{Nextremal}%
\end{equation}
for any $a\neq0,$ $b>0$ and $x_{0}\in\mathbb{R}^{N}.$

A remarkable fact is that for any domain $\Omega$ (open, but non-necessarily
bounded) the Sobolev constant $\mathcal{S}_{p,N}$ is still sharp with respect
to the inequality (\ref{Sobineq}), that is:
\begin{equation}
\mathcal{S}_{p,N}^{p}=\lambda_{p^{\star}}:=\inf\left\{  \mathcal{R}_{p^{\star
}}(u):u\in W_{0}^{1,p}(\Omega)\setminus\{0\}\right\}  . \label{lambda*}%
\end{equation}
This property of the critical case $q=p^{\star}$ may be easily verified by
using a simple scaling argument. As a consequence, in this critical case, the
only domain $\Omega$ whose the Rayleigh quotient has an extremal is
$\mathbb{R}^{N}.$ Indeed, if $w\in W_{0}^{1,p}(\Omega)/\{0\}$ is an extremal
for the Rayleigh quotient in $\Omega,$ then (by extending $w$ to zero out of
$\Omega$) $w$ is also an extremal for the Rayleigh quotient in $\mathbb{R}%
^{N}.$ This implies that $w$ must have an expression as in (\ref{Nextremal})
and hence its support must be the whole space $\mathbb{R}^{N},$ forcing thus
the equality $\Omega=\mathbb{R}^{N}.$

In this paper we are concerned with the behavior of $\lambda_{q}$ with respect
to $q\in\lbrack1,p^{\star}].$ Thus, we investigate the function $q\mapsto
\lambda_{q}$ defined by (\ref{lambdaq}). We prove that this function is of
bounded variation in $[1,p^{\star}],$ Lipschitz continuous in any closed
interval of the form $[1,p^{\star}-\epsilon]$ for $\epsilon>0$, and
left-continuous at $q=p^{\star}.$ These combined results imply that
$\lambda_{q}$ is absolute continuous on $[1,p^{\star}].$

Up to our knowledge, the only result about the continuity of the function
$q\mapsto\lambda_{q}$ is given in \cite[Thm 2.1]{Huang}, where the author
proves the continuity of this function in the open interval $(1,p)$ and the
lower semi-continuity in the open interval $(p,p^{\star}).$

Besides the theoretical aspects, our results are also important for the
computational approach of the Sobolev constants $\lambda_{q},$ since these
constants or the correspondent extremals are not explicitly known in general,
even for simple bounded domains. For recent numerical approaches related to
Sobolev type constants we refer to \cite{Antonietti,Caboussat}.

This paper is organized as follows. In Section 2 we derive a formula that
describes the dependence of $\mathcal{R}_{q}$ with respect to $q$ and obtain,
in consequence, the bounded variation of the function $q\mapsto\lambda_{q}$ in
the closed interval $[1,p^{\star}]$ and also the left-continuity of this
function at $q=p^{\star}.$ Still in Section 2 we obtain a upper bound for
$\mathcal{S}_{p,N}$ (see (\ref{boundSNp})) and we also show that for $1\leq
q<p^{\star}$ the Sobolev constant $\lambda_{q}$ of bounded domains $\Omega$
tends to zero when these domains tend to $\mathbb{R}^{N}.$

By applying set level techniques, we deduce in Section 3 some estimates for
$w_{q}$ and in Section 4 we combine these estimates with the formula derived
in Section 2 to prove the Lipschitz continuity of the function $q\mapsto
\lambda_{q}$ in each closed interval of the form $[1,p^{\star}-\epsilon].$ Our
results are in fact proved for the function $q\mapsto\left\vert \Omega
\right\vert ^{\frac{p}{q}}\lambda_{q}$ where $\left\vert \Omega\right\vert $
denotes the $N$-dimensional Lebesgue volume of $\Omega.$ But, of course, they
are automatically transferred to the function $q\mapsto\lambda_{q}.$

\section{Bounded variation and left-continuity}

We first describe the dependence of the Rayleigh quotient $\mathcal{R}_{q}(u)$
with respect to the parameter $q.$

\begin{lemma}
\label{formula}Let $0\not \equiv u\in W_{0}^{1,p}(\Omega)\cap L^{\infty
}(\Omega).$ Then, for each $1\leq s_{1}<s_{2}\leq p^{\ast}$ one has%
\begin{equation}
\left\vert \Omega\right\vert ^{\frac{p}{s_{1}}}\mathcal{R}_{s_{1}%
}(u)=\left\vert \Omega\right\vert ^{\frac{p}{s_{2}}}\mathcal{R}_{s_{2}}%
(u)\exp\left(  p\int_{s_{1}}^{s_{2}}\frac{K(t,u)}{t^{2}}dt\right)  \label{edo}%
\end{equation}
where
\begin{equation}
K(t,u):=\frac{\int_{\Omega}\left\vert u\right\vert ^{t}\ln\left\vert
u\right\vert ^{t}dx}{\left\Vert u\right\Vert _{t}^{t}}+\ln\left(  \left\vert
\Omega\right\vert \left\Vert u\right\Vert _{t}^{-t}\right)  \geq0.
\label{Klessthan}%
\end{equation}

\end{lemma}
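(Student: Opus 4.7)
The plan is to view the left-hand side as the value at $s_1$ of a scalar function of $t$ and to derive the identity by integrating a first-order ODE for its logarithm. Set
\[
F(t):=\ln\!\left(|\Omega|^{p/t}\mathcal{R}_{t}(u)\right)=\frac{p}{t}\ln|\Omega|+p\ln\|\nabla u\|_{p}-p\ln\|u\|_{t},\qquad t\in[s_{1},s_{2}].
\]
The desired identity is equivalent to $F(s_{1})-F(s_{2})=p\int_{s_{1}}^{s_{2}}K(t,u)t^{-2}\,dt$, so it suffices to show $F\in C^{1}([s_{1},s_{2}])$ with $F'(t)=-p\,K(t,u)/t^{2}$ and then integrate.

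First I would justify that $A(t):=\|u\|_{t}^{t}=\int_{\Omega}|u|^{t}\,dx$ is $C^{1}$ on $[1,p^{\ast}]$ with $A'(t)=\int_{\Omega}|u|^{t}\ln|u|\,dx$. Since $u\in L^{\infty}(\Omega)$ and $\Omega$ is bounded, the integrand $|u|^{t}\ln|u|$ is dominated on any compact sub-interval of $t$-values by a fixed $L^{\infty}(\Omega)$ function (using $x^{t}|\ln x|\to 0$ as $x\to 0^{+}$ for $t\geq 1$, and boundedness of $|u|$ from above), so differentiation under the integral sign applies. Then
\[
\frac{d}{dt}\ln\|u\|_{t}=\frac{d}{dt}\!\left(\frac{\ln A(t)}{t}\right)=-\frac{1}{t^{2}}\ln\|u\|_{t}^{t}+\frac{1}{t^{2}}\cdot\frac{\int_{\Omega}|u|^{t}\ln|u|^{t}\,dx}{\|u\|_{t}^{t}},
\]
where I used $\int_{\Omega}|u|^{t}\ln|u|\,dx=t^{-1}\int_{\Omega}|u|^{t}\ln|u|^{t}\,dx$. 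Plugging this into $F'(t)=-p t^{-2}\ln|\Omega|-p\frac{d}{dt}\ln\|u\|_{t}$ and collecting terms gives exactly $F'(t)=-p\,K(t,u)/t^{2}$. Integrating from $s_{1}$ to $s_{2}$ and exponentiating yields (\ref{edo}).

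For the inequality $K(t,u)\geq 0$, I would apply Jensen to the convex function $\varphi(x)=x\ln x$ with respect to the normalized Lebesgue measure $d\mu:=dx/|\Omega|$ on $\Omega$ and the density $\rho:=|u|^{t}/\|u\|_{t}^{t}$, which satisfies $\int_{\Omega}\rho\,dx=1$. Writing $\int_{\Omega}|u|^{t}\ln|u|^{t}\,dx/\|u\|_{t}^{t}=\int_{\Omega}\rho\ln\rho\,dx+\ln\|u\|_{t}^{t}$, the quantity $K(t,u)$ reduces to
\[
K(t,u)=\int_{\Omega}\rho\ln\rho\,dx+\ln|\Omega|=|\Omega|\int_{\Omega}\varphi(\rho)\,d\mu-|\Omega|\,\varphi\!\left(\tfrac{1}{|\Omega|}\right),
\]
and Jensen's inequality $\varphi(\int\rho\,d\mu)\leq\int\varphi(\rho)\,d\mu$, combined with $\int\rho\,d\mu=1/|\Omega|$, gives $K(t,u)\geq 0$.

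The only real subtlety is the differentiation under the integral sign at $t=1$ and the continuity of $t\mapsto A'(t)$; both are routine once one invokes the $L^{\infty}$-bound on $u$ and the finiteness of $|\Omega|$, so I do not anticipate any deep obstacle. The essential content of the lemma is just the computation of $F'$ and the Jensen-type lower bound on the entropy-like quantity $K(t,u)$.
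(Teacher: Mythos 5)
Your proof is correct and follows essentially the same route as the paper: differentiate the logarithm of $|\Omega|^{p/t}\mathcal{R}_t(u)$ in $t$ to obtain $-pK(t,u)/t^{2}$, integrate, and establish $K\geq 0$ via Jensen's inequality for $\xi\mapsto\xi\ln\xi$ with the normalized measure $dx/|\Omega|$ (your version with the density $\rho=|u|^{t}/\|u\|_{t}^{t}$ is equivalent to the paper's direct application to $|u|^{t}$). The only difference is that you explicitly justify differentiation under the integral sign using the $L^{\infty}$ hypothesis, which the paper leaves implicit.
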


Before proving Lemma \ref{formula} let us make a technical remark related to
the assumptions of this lemma. If $u\in W_{0}^{1,p}(\Omega)$ and $1\leq
t<p^{\star},$ then
\begin{align*}
\int_{\Omega}\left\vert u\right\vert ^{t}\left\vert \ln\left\vert u\right\vert
^{t}\right\vert dx  &  =\int_{\left\vert u\right\vert <1}\left\vert
u\right\vert ^{t}\left\vert \ln\left\vert u\right\vert ^{t}\right\vert
dx+t\int_{\left\vert u\right\vert \geq1}\left\vert u\right\vert ^{t}%
\ln\left\vert u\right\vert dx\\
&  \leq\frac{\left\vert \Omega\right\vert }{e}+t\int_{\left\vert u\right\vert
\geq1}\left\vert u\right\vert ^{t}\frac{\left\vert u\right\vert ^{p^{\star}%
-t}}{e(p^{\star}-t)}dx\leq\frac{\left\vert \Omega\right\vert }{e}%
+\frac{p^{\star}\left\Vert u\right\Vert _{p^{\star}}^{p^{\star}}}{e(p^{\star
}-t)}<\infty.
\end{align*}
However, we were not able to determine the finiteness of the integral
$\int_{\Omega}\left\vert u\right\vert ^{p^{\star}}\left\vert \ln\left\vert
u\right\vert \right\vert dx$ without assuming that $u\in L^{\infty}(\Omega).$
Fortunately, the assumption $W_{0}^{1,p}(\Omega)\cap L^{\infty}(\Omega)$ will
be sufficient to our purposes in this paper.

\begin{proof}
[Proof of Lemma \ref{formula}]We firstly note that%
\begin{align*}
\frac{d}{dq}\ln\left(  \frac{\left\vert \Omega\right\vert ^{\frac{1}{q}}%
}{\left\Vert u\right\Vert _{q}}\right)   &  =\frac{d}{dq}\frac{\ln\left\vert
\Omega\right\vert }{q}-\frac{d}{dq}\left(  \frac{1}{q}\ln\int_{\Omega
}\left\vert u\right\vert ^{q}dx\right) \\
&  =-\frac{1}{q^{2}}\ln\left\vert \Omega\right\vert -\frac{1}{q^{2}}\left[
-\ln\left\Vert u\right\Vert _{q}^{q}+\frac{\int_{\Omega}\left\vert
u\right\vert ^{q}\ln\left\vert u\right\vert ^{q}dx}{\left\Vert u\right\Vert
_{q}^{q}}\right]  =-\frac{K(q,u)}{q^{2}}.
\end{align*}
Thus, integration on the interval $[s_{1},s_{2}]$ gives
\[
\frac{\left\vert \Omega\right\vert ^{\frac{1}{s_{2}}}}{\left\Vert u\right\Vert
_{s_{2}}}=\frac{\left\vert \Omega\right\vert ^{\frac{1}{s_{1}}}}{\left\Vert
u\right\Vert _{s_{1}}}\exp\left(  -\int_{s_{1}}^{s_{2}}\frac{K(t,u)}{t^{2}%
}dt\right)
\]
from what (\ref{edo}) follows easily.

Since the continuous function $h:[0,+\infty)\longrightarrow\mathbb{R}$ defined
by $h(\xi):=\xi\ln\xi$, if $\xi>0$, and $h(0)=0$ is convex, it follows from
Jensen's inequality that%
\[
h\left(  \left\vert \Omega\right\vert ^{-1}\int_{\Omega}\left\vert
u\right\vert ^{t}dx\right)  \leq\left\vert \Omega\right\vert ^{-1}\int
_{\Omega}h(\left\vert u\right\vert ^{t})dx,
\]
thus yielding%
\[
\left\Vert u\right\Vert _{t}^{t}\ln\left(  \left\vert \Omega\right\vert
^{-1}\left\Vert u\right\Vert _{t}^{t}\right)  \leq\int_{\Omega}\left\vert
u\right\vert ^{t}\ln\left\vert u\right\vert ^{t}dx,
\]
from what follows that $K(t,u)$ defined in (\ref{Klessthan}) is nonnegative.
\end{proof}

\begin{proposition}
\label{monot}The function $q\mapsto\left\vert \Omega\right\vert ^{\frac{p}{q}%
}\lambda_{q}$ is strictly decreasing in $[1,p^{\star}].$
\end{proposition}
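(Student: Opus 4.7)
The plan is to derive the strict monotonicity directly from Lemma~\ref{formula} by inserting the extremal $w_{s_1}$ as the test function. Fix $1 \le s_1 < s_2 \le p^{\star}$. Since $s_1 < p^{\star}$, a normalized extremal $w_{s_1}$ exists, and by the classical $C^{1,\alpha}(\overline{\Omega})$ regularity recalled in the introduction, $w_{s_1} \in W_{0}^{1,p}(\Omega) \cap L^{\infty}(\Omega)$, hence is admissible as the function $u$ in Lemma~\ref{formula}. I deliberately use $w_{s_1}$ rather than $w_{s_2}$, since an extremal for the latter need not exist in the critical case $s_2 = p^{\star}$.

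Applying Lemma~\ref{formula} with $u = w_{s_1}$ and using $\mathcal{R}_{s_1}(w_{s_1}) = \lambda_{s_1}$ yields
\begin{equation*}
\left\vert \Omega\right\vert ^{\frac{p}{s_1}} \lambda_{s_1} = \left\vert \Omega\right\vert ^{\frac{p}{s_2}} \mathcal{R}_{s_2}(w_{s_1}) \exp\left( p \int_{s_1}^{s_2} \frac{K(t, w_{s_1})}{t^2}\, dt \right).
\end{equation*}
Because $w_{s_1} \not\equiv 0$ is admissible in the infimum defining $\lambda_{s_2}$, one has $\mathcal{R}_{s_2}(w_{s_1}) \geq \lambda_{s_2}$. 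Thus the desired strict inequality $\left\vert \Omega\right\vert^{p/s_1} \lambda_{s_1} > \left\vert \Omega\right\vert^{p/s_2} \lambda_{s_2}$ will follow at once, provided the exponential factor is shown to be \emph{strictly} greater than $1$. This in turn reduces to verifying that $K(t, w_{s_1})$ is not identically zero on $(s_1, s_2)$.

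Here lies the main (and rather mild) obstacle: one must upgrade the non-strict bound $K \geq 0$ established in Lemma~\ref{formula} to a strict inequality for the specific test function $w_{s_1}$. That bound came from Jensen's inequality applied to the strictly convex function $h(\xi) = \xi \ln \xi$, and equality in strict Jensen holds only if $\left\vert w_{s_1}\right\vert^{t}$ is constant almost everywhere on $\Omega$. However, by classical regularity and the strong maximum principle applied to (\ref{diricq}), $w_{s_1}$ is continuous, strictly positive inside $\Omega$, and vanishes on $\partial\Omega$; in particular $\left\vert w_{s_1}\right\vert^{t}$ attains a continuum of values and is patently non-constant. Therefore $K(t, w_{s_1}) > 0$ for every $t \in [1, p^{\star}]$, the exponential factor exceeds $1$, and the proposition follows.
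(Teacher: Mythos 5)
Your proof is correct, and it reaches the strict inequality by a genuinely different route from the paper. Both arguments start identically: insert $u=w_{s_1}$ into Lemma~\ref{formula} and use $\mathcal{R}_{s_1}(w_{s_1})=\lambda_{s_1}$. The difference is where the strictness comes from. You keep the non-strict bound $\mathcal{R}_{s_2}(w_{s_1})\geq\lambda_{s_2}$ and instead sharpen the exponential factor, invoking the equality case of Jensen's inequality for the strictly convex $h(\xi)=\xi\ln\xi$: equality would force $|w_{s_1}|^t$ to be a.e.\ constant, which is impossible for a continuous function that is positive in the domain and vanishes on the boundary; hence $K(t,w_{s_1})>0$ for every $t$ and the exponential strictly exceeds $1$. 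The paper does the opposite: it only uses $K\geq 0$ (so the exponential is merely $\geq 1$) and proves instead that $\mathcal{R}_{s_2}(w_{s_1})>\lambda_{s_2}$ strictly --- for $s_2=p^{\star}$ because no extremal exists, and for $s_2<p^{\star}$ by a PDE argument: if $w_{s_1}$ were also an extremal for $\mathcal{R}_{s_2}$ it would satisfy two Euler--Lagrange equations (\ref{aux1}) and (\ref{aux2}) simultaneously, forcing it to be a nonzero constant in $W_0^{1,p}(\Omega)$, a contradiction. Your route is more elementary (no manipulation of the Euler--Lagrange equations and no appeal to nonexistence of critical extremals), at the cost of needing the strict form of Jensen and the qualitative information that $w_{s_1}$ is continuous, positive in $\Omega$ and zero on $\partial\Omega$ --- facts the paper has already recorded, so nothing extra is required. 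The only point worth stating explicitly in your write-up is that a continuous function a.e.\ equal to a constant on the open connected set $\Omega$ is identically that constant, which is what turns ``a.e.\ constant'' into the contradiction with positivity versus the zero boundary values.
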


\begin{proof}
Let $1\leq s_{1}<s_{2}\leq p^{\star}$ and $w_{s_{1}}\in W_{0}^{1,p}%
(\Omega)\cap C^{1,\alpha}(\overline{\Omega})$ the positive and $L^{s_{1}}%
$-normalized extremal of the Rayleigh quotient $\mathcal{R}_{s_{1}}.$ Note
from the definition of $w_{s_{1}}$ that%
\begin{equation}
-\Delta_{p}w_{s_{1}}=\lambda_{s_{1}}w_{s_{1}}^{s_{1}-1}\text{ \ in \ }\Omega.
\label{aux1}%
\end{equation}

It follows from Lemma \ref{formula} that%
\[
\left\vert \Omega\right\vert ^{\frac{p}{s_{1}}}\lambda_{s_{1}}=\left\vert
\Omega\right\vert ^{\frac{p}{s_{2}}}\mathcal{R}_{s_{2}}(w_{s_{1}})\exp\left(
p\int_{s_{1}}^{s_{2}}\frac{K(t,w_{s_{1}})}{t^{2}}dt\right)  \geq\left\vert
\Omega\right\vert ^{\frac{p}{s_{2}}}\mathcal{R}_{s_{2}}(w_{s_{1}})>\left\vert
\Omega\right\vert ^{\frac{p}{s_{2}}}\lambda_{s_{2}}%
\]
since $\mathcal{R}_{s_{1}}(w_{s_{1}})=\lambda_{s_{1}},$ $K(t,w_{s_{1}})\geq0$
and $\mathcal{R}_{s_{2}}(w_{s_{1}})>\lambda_{s_{2}}.$ We need only to
guarantee the strictness of the last inequality. Obviously, if $s_{2}%
=p^{\star}$ the inequality is really strict because the Rayleigh quotient
$\mathcal{R}_{p\star}$ does not reach a minimum value. Thus, let us suppose
that $\lambda_{s_{2}}=\mathcal{R}_{s_{2}}(w_{s_{1}})$ for $s_{2}<p^{\star}.$
Then
\[
-\Delta_{p}(w_{s_{1}}/\left\Vert w_{s_{1}}\right\Vert _{s_{2}})=\lambda
_{s_{2}}(w_{s_{1}}/\left\Vert w_{s_{1}}\right\Vert _{s_{2}})^{s_{2}-1}%
\]
and hence the $(p-1)$-homogeneity of the operator $\Delta_{p}$ yields
\begin{equation}
-\Delta_{p}w_{s_{1}}=\lambda_{s_{2}}\left\Vert w_{s_{1}}\right\Vert _{s_{2}%
}^{p-s_{2}}w_{s_{1}}^{s_{2}-1}\text{ \ in \ }\Omega.\label{aux2}%
\end{equation}
The combining of (\ref{aux1}) with (\ref{aux2}) produces
\[
w_{s_{1}}\equiv\left(  \frac{\lambda_{s_{1}}\left\Vert w_{s_{1}}\right\Vert
_{s_{2}}^{s_{2}-p}}{\lambda_{s_{2}}}\right)  ^{\frac{1}{s_{2}-s_{1}}}\text{
\ in \ }\Omega.
\]
Since the only constant function in $W_{0}^{1,p}(\Omega)$ is the null function
we arrive at the contradiction $0\equiv w_{s_{1}}>0$ in $\Omega.$

Thus, we have concluded that $\left\vert \Omega\right\vert ^{\frac{p}{s_{1}}%
}\lambda_{s_{1}}>\left\vert \Omega\right\vert ^{\frac{p}{s_{2}}}\lambda
_{s_{2}}$ for $1\leq s_{1}<s_{2}\leq p^{\star}.$
\end{proof}

The following corollary is immediate after writing $\lambda_{q}$ as a product
of two monotonic functions: $\lambda_{q}=\left\vert \Omega\right\vert
^{-\frac{p}{q}}(\left\vert \Omega\right\vert ^{\frac{p}{q}}\lambda_{q}).$

\begin{corollary}
\label{bv}The function $q\mapsto\lambda_{q}$ is of bounded variation in
$[1,p^{\star}].$
\end{corollary}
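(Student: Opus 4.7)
The plan is to exploit the factorization $\lambda_{q}=|\Omega|^{-p/q}\bigl(|\Omega|^{p/q}\lambda_{q}\bigr)$ suggested by the statement and to recall that the product of two bounded functions of bounded variation on a closed interval is again of bounded variation.

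First, I would invoke Proposition \ref{monot}: the map $g(q):=|\Omega|^{p/q}\lambda_{q}$ is strictly decreasing on $[1,p^{\star}]$. Any monotonic real-valued function on a closed bounded interval is of bounded variation there, with total variation equal to $|g(1)-g(p^{\star})|$. In particular $g$ is bounded on $[1,p^{\star}]$ (trivially, since a monotonic function on a compact interval attains its extrema at the endpoints).

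Next I would handle the first factor $f(q):=|\Omega|^{-p/q}$. This is a smooth function of $q\in[1,p^{\star}]$ (the derivative is $(p\ln|\Omega|/q^{2})|\Omega|^{-p/q}$, which has constant sign on $[1,p^{\star}]$), so $f$ is monotonic (constant when $|\Omega|=1$, strictly increasing when $|\Omega|>1$, strictly decreasing when $0<|\Omega|<1$). Hence $f$ is also bounded and of bounded variation on $[1,p^{\star}]$.

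Finally, I would conclude by the standard fact that if $f,g\colon[a,b]\to\mathbb{R}$ are both bounded and of bounded variation then so is their product, with
\[
V_{a}^{b}(fg)\leq \|f\|_{\infty}V_{a}^{b}(g)+\|g\|_{\infty}V_{a}^{b}(f).
\]
Applied to the present $f$ and $g$, this yields that $\lambda_{q}=f(q)g(q)$ is of bounded variation on $[1,p^{\star}]$. There is no real obstacle: once Proposition \ref{monot} is in hand, the only ingredient is the elementary algebra of BV functions on a compact interval, so the corollary is indeed immediate.
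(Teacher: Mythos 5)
Your proof is correct and follows exactly the paper's intended argument: the paper also deduces the corollary from the factorization $\lambda_{q}=\left\vert \Omega\right\vert ^{-\frac{p}{q}}\left(\left\vert \Omega\right\vert ^{\frac{p}{q}}\lambda_{q}\right)$ as a product of two monotonic (hence bounded, BV) functions, citing Proposition \ref{monot}. You merely spell out the standard BV product estimate that the paper leaves as ``immediate.''
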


Another consequence of Proposition \ref{monot} is that for each $1\leq
q<p^{\ast}$ the Sobolev constant $\lambda_{q}$ of a bounded domain $\Omega$
tends to zero as $\Omega\nearrow\mathbb{R}^{N}.$ In fact, this asymptotic
behavior follows from the following corollary.

\begin{corollary}
Let $B_{R}(x_{0})\subset\mathbb{R}^{N}$ denote the ball centered at $x_{0}$
and with radius $R$ and let%
\[
\lambda_{q}(R):=\min\left\{  \frac{\left\Vert \nabla u\right\Vert
_{L^{p}(B_{R}(x_{0}))}^{p}}{\left\Vert u\right\Vert _{L^{q}(B_{R}(x_{0}))}%
^{p}}:u\in W_{0}^{1,p}(B_{R}(x_{0}))/\{0\}\right\}  ,\hspace{0.5cm}1\leq
q<p^{\star}.
\]
Then
\begin{equation}
\lambda_{q}(R)\rightarrow0\text{ \ \ as \ }R\rightarrow\infty. \label{Rinfty}%
\end{equation}

\end{corollary}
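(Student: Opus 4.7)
The plan is to combine Proposition~\ref{monot} with the explicit formula (\ref{c1ball}) for $\lambda_{1}$ on a ball. Since $\Omega=B_{R}(x_{0})$ has volume $\left\vert B_{R}(x_{0})\right\vert =\omega_{N}R^{N}$, and since by Proposition~\ref{monot} the function $q\mapsto\left\vert \Omega\right\vert ^{p/q}\lambda_{q}$ is strictly decreasing on $[1,p^{\star}]$, applying this with $s_{1}=1$ and $s_{2}=q\in(1,p^{\star})$ yields
\[
(\omega_{N}R^{N})^{p}\,\lambda_{1}(R)\;\ge\;(\omega_{N}R^{N})^{p/q}\,\lambda_{q}(R),
\]
and hence $\lambda_{q}(R)\le(\omega_{N}R^{N})^{p(1-1/q)}\lambda_{1}(R)$. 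The case $q=1$ is trivial (this inequality becomes an equality).

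Next, I would plug in (\ref{c1ball}), which reads $\lambda_{1}(R)=c(N,p)\,R^{-(p^{\star}-1)(N-p)}$ for a constant $c(N,p)>0$. Using $p^{\star}=Np/(N-p)$, the exponent simplifies to
\[
(p^{\star}-1)(N-p)=Np-(N-p)=N(p-1)+p.
\]
Combining this with the factor $R^{Np(1-1/q)}$ coming from $(\omega_{N}R^{N})^{p(1-1/q)}$, the total power of $R$ in the resulting upper bound for $\lambda_{q}(R)$ is
\[
Np\!\left(1-\tfrac{1}{q}\right)-N(p-1)-p\;=\;N-\tfrac{Np}{q}-p.
\]

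Finally, I would check that this exponent is strictly negative exactly when $q<p^{\star}$: indeed, $N-Np/q-p<0$ is equivalent to $q(N-p)<Np$, i.e.\ $q<Np/(N-p)=p^{\star}$. Therefore for every $q\in[1,p^{\star})$ there exists a constant $C=C(N,p,q)>0$ such that $\lambda_{q}(R)\le C\,R^{N-Np/q-p}\longrightarrow 0$ as $R\to\infty$, proving (\ref{Rinfty}).

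There is no conceptual obstacle once Proposition~\ref{monot} is available; the entire argument reduces to bookkeeping of exponents, and the only thing that must be verified is that the computed power of $R$ becomes negative precisely at the Sobolev critical exponent — which is where $p^{\star}=Np/(N-p)$ makes its essential appearance.
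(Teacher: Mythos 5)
Your argument is correct and follows essentially the same route as the paper: apply Proposition~\ref{monot} with $s_{1}=1$, $s_{2}=q$ on $\Omega=B_{R}(x_{0})$ to get $\lambda_{q}(R)\leq(\omega_{N}R^{N})^{p(1-1/q)}\lambda_{1}(R)$, then substitute the explicit formula (\ref{c1ball}) and observe that the resulting power of $R$, namely $N-\frac{Np}{q}-p=-(N-p)\left(\frac{p^{\star}}{q}-1\right)$, is strictly negative precisely for $q<p^{\star}$. Your extra verification that the exponent's sign change occurs exactly at $q=p^{\star}$ is a pleasant sanity check that the paper leaves implicit, but the proofs are otherwise identical.
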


\begin{proof}
It follows from Proposition \ref{monot} that
\[
\lambda_{q}(R)\leq\lambda_{1}(R)(\omega_{N}R^{N})^{p(1-\frac{1}{q})}%
\]
where, as before, $\omega_{N}=\left\vert B_{1}(0)\right\vert .$

Now, replacing $\lambda_{1}(R)$ by its expression (\ref{c1ball}) we obtain
\begin{equation}
\lambda_{q}(R)\leq\left[  \frac{p+N(p-1)}{\omega_{N}(p-1)}\right]  ^{p-1}%
\frac{N(\omega_{N})^{p(1-\frac{1}{q})}}{R^{(N-p)(\frac{p^{\star}}{q}-1)}%
},\label{lambdaR}%
\end{equation}
yielding (\ref{Rinfty}).
\end{proof}

\begin{remark}
Since $\lambda_{p^{\ast}}(R)\equiv\mathcal{S}_{N,p}^{p},$ $\omega_{N}%
=\pi^{N/2}/\Gamma(1+N/2)$ and $\frac{1}{p}-\frac{1}{p^{\star}}=\frac{1}{N},$
by making $q=p^{\ast}$ in (\ref{lambdaR}) we obtain the following upper bound
for $\mathcal{S}_{N,p}$ with is quite comparable with the expression
(\ref{SNp}):
\begin{equation}
\mathcal{S}_{N,p}\leq\sqrt{\pi}N^{\frac{1}{p}}\left(  \frac{N-p}{p-1}\right)
^{\frac{p-1}{p}}\frac{(p^{\ast}-1)^{\frac{p-1}{p}}}{\Gamma(1+N/2)^{\frac{1}%
{N}}}. \label{boundSNp}%
\end{equation}

\end{remark}

We now prove the left-continuity of the function $q\mapsto\lambda_{q}$ in the
interval $(1,p^{\star}].$ Hence, as a particular case we obtain
\begin{equation}
\lim_{q\rightarrow(p^{\star})^{-}}\lambda_{q}=\lambda_{p^{\star}}%
\quad(=\mathcal{S}_{p,N}^{p}). \label{leftp*}%
\end{equation}

\begin{theorem}
\label{T1}For each $q\in(1,p^{\star}]$ it holds $\lim\limits_{s\rightarrow
q^{-}}\lambda_{s}=\lambda_{q}.$
\end{theorem}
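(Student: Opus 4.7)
The strategy is to exploit the monotonicity of $q\mapsto|\Omega|^{p/q}\lambda_q$ established in Proposition \ref{monot}, combined with the exact formula from Lemma \ref{formula} applied to a near-extremal test function. Set $g(s):=|\Omega|^{p/s}\lambda_s$. Since $g$ is strictly decreasing on $[1,p^\star]$, the one-sided limit $L:=\lim_{s\to q^-}g(s)$ exists and satisfies $L\geq g(q)$. The task is to prove the reverse inequality $L\leq g(q)$; the conclusion $\lim_{s\to q^-}\lambda_s=\lambda_q$ will then follow from the continuity of $s\mapsto|\Omega|^{p/s}$.

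To obtain $L\leq g(q)$, I would fix $\epsilon>0$ and select a test function $u\in C_c^\infty(\Omega)\setminus\{0\}\subset W_0^{1,p}(\Omega)\cap L^\infty(\Omega)$ with $\mathcal{R}_q(u)<\lambda_q+\epsilon$. Such a $u$ exists by the definition of $\lambda_q$ as an infimum and by the density of $C_c^\infty(\Omega)$ in $W_0^{1,p}(\Omega)$ together with continuity of $\mathcal{R}_q$ with respect to the $W_0^{1,p}$-norm (this handles $q=p^\star$ too, where no extremal is available). For any $s\in(1,q)$, applying Lemma \ref{formula} with $s_1=s$ and $s_2=q$ gives
\[
g(s)\leq|\Omega|^{p/s}\mathcal{R}_s(u)=|\Omega|^{p/q}\mathcal{R}_q(u)\exp\!\left(p\int_s^q\frac{K(t,u)}{t^2}\,dt\right).
\]

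The key remaining step is to show the exponential factor tends to $1$ as $s\to q^-$. Because $u\in L^\infty(\Omega)$ and $u\not\equiv 0$, the map $t\mapsto\int_\Omega|u|^t\ln|u|^t\,dx$ and $t\mapsto\|u\|_t^t$ are smooth on any compact subinterval of $(0,\infty)$ by dominated convergence (the integrands are uniformly bounded by an integrable function on $\Omega$, which has finite measure). Hence $t\mapsto K(t,u)$ is continuous and bounded on $[1,q]$, so $\int_s^q K(t,u)t^{-2}\,dt\to 0$ as $s\to q^-$. Passing to the limit,
\[
L=\limsup_{s\to q^-}g(s)\leq|\Omega|^{p/q}\mathcal{R}_q(u)\leq|\Omega|^{p/q}(\lambda_q+\epsilon)=g(q)+\epsilon\,|\Omega|^{p/q}.
\]
Letting $\epsilon\to 0^+$ yields $L\leq g(q)$, and together with $L\geq g(q)$ we conclude $L=g(q)$.

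The main obstacle is making sure the test function has good enough integrability for $K(t,u)$ to behave regularly through $t=q$, especially when $q=p^\star$, where one cannot take $u=w_q$ (no extremal exists) and where the technical remark following Lemma \ref{formula} warns that $\int_\Omega|u|^{p^\star}|\ln|u||\,dx$ need not be finite for arbitrary $u\in W_0^{1,p}(\Omega)$. Restricting the choice of $u$ to $C_c^\infty(\Omega)$ (hence in $L^\infty$) circumvents this issue cleanly and is enough thanks to density.
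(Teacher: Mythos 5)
Your proposal is correct and follows essentially the same route as the paper: both rely on the monotonicity of $q\mapsto|\Omega|^{p/q}\lambda_q$ for one inequality, and for the other apply Lemma \ref{formula} to a test function $u\in C_c^\infty(\Omega)\setminus\{0\}$, show that $\exp\bigl(p\int_s^q K(t,u)t^{-2}\,dt\bigr)\to 1$ as $s\to q^-$, and invoke density of $C_c^\infty(\Omega)$ in $W_0^{1,p}(\Omega)$. The only cosmetic difference is that the paper bounds $K(t,u)\leq tM_q(u)$ explicitly via $\|u\|_\infty$ and H\"older, whereas you argue boundedness of $K(\cdot,u)$ by dominated convergence; both are valid.
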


\begin{proof}
Let us fix $s<q$ and $u\in C_{c}^{\infty}(\Omega)\setminus\{0\}.$ If follows
from Lemma \ref{formula} and Proposition \ref{monot} that%
\begin{equation}
\left\vert \Omega\right\vert ^{\frac{p}{q}}\lambda_{q}<\left\vert
\Omega\right\vert ^{\frac{p}{s}}\lambda_{s}\leq\left\vert \Omega\right\vert
^{\frac{p}{s}}\mathcal{R}_{s}(u)=\left\vert \Omega\right\vert ^{\frac{p}{q}%
}\mathcal{R}_{q}(u)\exp\left(  p\int_{s}^{q}\frac{K(t,u)}{t^{2}}dt\right)
.\label{cqcscq}%
\end{equation}

For $s\leq t\leq q$ H\"{o}lder's inequality implies that%
\[
\left\vert \Omega\right\vert ^{-\frac{1}{s}}\left\Vert u\right\Vert _{s}%
\leq\left\vert \Omega\right\vert ^{-\frac{1}{t}}\left\Vert u\right\Vert
_{t}\leq\left\vert \Omega\right\vert ^{-\frac{1}{q}}\left\Vert u\right\Vert
_{q}.
\]
$\ $Hence, since $\left\vert \Omega\right\vert ^{-\frac{1}{s}}\left\Vert
u\right\Vert _{s}\rightarrow\left\vert \Omega\right\vert ^{-\frac{1}{q}%
}\left\Vert u\right\Vert _{q}$ as $s\rightarrow q$ we obtain
\[
\frac{\left\vert \Omega\right\vert ^{-\frac{1}{q}}\left\Vert u\right\Vert
_{q}}{2}\leq\left\vert \Omega\right\vert ^{-\frac{1}{s}}\left\Vert
u\right\Vert _{s}\leq\left\vert \Omega\right\vert ^{-\frac{1}{t}}\left\Vert
u\right\Vert _{t}\leq\left\vert \Omega\right\vert ^{-\frac{1}{q}}\left\Vert
u\right\Vert _{q}%
\]
for $s\leq t\leq q\ $with $s$ sufficiently close to $q.$

It follows from these estimates that%
\begin{align*}
K(t,u) &  =\frac{t\int_{\Omega}\left\vert u\right\vert ^{t}\ln\left\vert
u\right\vert dx}{\left\Vert u\right\Vert _{t}^{t}}+t\ln\left(  \frac
{\left\vert \Omega\right\vert ^{\frac{1}{t}}}{\left\Vert u\right\Vert _{t}%
}\right)  \\
&  \leq\frac{t\ln\left\Vert u\right\Vert _{\infty}\int_{\Omega}\left\vert
u\right\vert ^{t}dx}{\left\Vert u\right\Vert _{t}^{t}}+t\ln\left(
\frac{2\left\vert \Omega\right\vert ^{\frac{1}{q}}}{\left\Vert u\right\Vert
_{q}}\right)  =t\ln\left(  \frac{2\left\vert \Omega\right\vert ^{\frac{1}{q}%
}\left\Vert u\right\Vert _{\infty}}{\left\Vert u\right\Vert _{q}}\right)
=:tM_{q}(u).
\end{align*}
Therefore,
\[
\exp\left(  p\int_{s}^{q}\frac{K(t,u)}{t^{2}}dt\right)  \leq\exp\left(
pM_{q}(u)\ln(\frac{q}{s})\right)  =\left(  \frac{q}{s}\right)  ^{pM_{q}(u)}%
\]
and (\ref{cqcscq}) yields%
\[
\left\vert \Omega\right\vert ^{\frac{p}{q}}\lambda_{q}<\left\vert
\Omega\right\vert ^{\frac{p}{s}}\lambda_{s}\leq\left\vert \Omega\right\vert
^{\frac{p}{q}}\mathcal{R}_{q}(u)\left(  \frac{q}{s}\right)  ^{pM_{q}(u)}.
\]

By making $s\rightarrow q^{-}$ we conclude that%
\[
\lambda_{q}\leq\liminf_{s\rightarrow q^{-}}\lambda_{s}\leq\limsup
_{s\rightarrow q^{-}}\lambda_{s}\leq\mathcal{R}_{q}(u)
\]
for each $u\in C_{c}^{\infty}(\Omega)\setminus\{0\}.$ Since $C_{c}^{\infty
}(\Omega)$ is dense in $W_{0}^{1,p}(\Omega)$ this clearly implies that
\[
\lambda_{q}\leq\liminf_{s\rightarrow q^{-}}\lambda_{s}\leq\limsup
_{s\rightarrow q^{-}}\lambda_{s}\leq\mathcal{R}_{q}(u)\text{ \ for all }u\in
W_{0}^{1,p}(\Omega)\setminus\{0\}.
\]
Therefore,
\[
\lambda_{q}\leq\liminf_{s\rightarrow q^{-}}\lambda_{s}\leq\limsup
_{s\rightarrow q^{-}}\lambda_{s}\leq\lambda_{q}%
\]
from what follows that $\lim\limits_{s\rightarrow q^{-}}\lambda_{s}%
=\lambda_{q}.$
\end{proof}

\section{Bounds for $w_{q}$}

In this section we deduce some bounds for the extremal $w_{q}$ defined by
(\ref{wq}). Our results are based on level set techniques and inspired by
\cite{Bandle} and \cite{Ladyz}.

\begin{proposition}
\label{linfty}Let $1\leq q<p^{\star}$ and $\sigma\geq1.$ Then, it holds
\begin{equation}
2^{-\frac{N(p-1)+\sigma p}{p}}C_{q}\left\Vert w_{q}\right\Vert _{\infty
}^{\frac{N(p-q)+\sigma p}{p}}\leq\left\Vert w_{q}\right\Vert _{\sigma}%
^{\sigma} \label{estim2}%
\end{equation}
where
\begin{equation}
C_{q}:=\left(  \frac{p}{p+N(p-1)}\right)  ^{N+1}\left(  \frac{\mathcal{S}%
_{N,p}^{p}}{\lambda_{q}}\right)  ^{\frac{N}{p}}. \label{Cq}%
\end{equation}

\end{proposition}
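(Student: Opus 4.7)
The plan is to estimate $\|w_q\|_\sigma^\sigma$ from below by the crude pointwise bound $(M/2)^\sigma\mu(M/2)$, where $M:=\|w_q\|_\infty$ and $\mu(t):=|\{w_q>t\}|$ is the distribution function of $w_q$, and then to bound $\mu(M/2)$ from below by converting the PDE satisfied by $w_q$ into a differential inequality for the cumulative quantity $F(t):=\int_t^M\mu(s)\,ds=\|(w_q-t)_+\|_1$.

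First, I would test the weak form of $-\Delta_pw_q=\lambda_qw_q^{q-1}$ against $(w_q-t)_+\in W_0^{1,p}(\Omega)$ and use the pointwise bound $w_q\leq M$ to produce
\[
\int_\Omega|\nabla(w_q-t)_+|^p\,dx=\lambda_q\int_\Omega w_q^{q-1}(w_q-t)_+\,dx\leq\lambda_qM^{q-1}\|(w_q-t)_+\|_1.
\]
Applying the Sobolev inequality $\mathcal{S}_{N,p}^p\|(w_q-t)_+\|_{p^\star}^p\leq\int|\nabla(w_q-t)_+|^p\,dx$ on the left and H\"older's inequality $\|(w_q-t)_+\|_1\leq\mu(t)^{1-1/p^\star}\|(w_q-t)_+\|_{p^\star}$ on the right, I obtain
\[
\mathcal{S}_{N,p}^p\|(w_q-t)_+\|_{p^\star}^{p-1}\leq\lambda_qM^{q-1}\mu(t)^{1-1/p^\star}.
\]
Feeding this back into the same H\"older bound for $\|(w_q-t)_+\|_1=F(t)$, using $F'=-\mu$, and simplifying via $(1-1/p^\star)p/(p-1)=(N(p-1)+p)/(N(p-1))$, I get the differential inequality
\[
F(t)\leq A\,(-F'(t))^{(N(p-1)+p)/(N(p-1))},\qquad A:=(\lambda_qM^{q-1}/\mathcal{S}_{N,p}^p)^{1/(p-1)}.
\]

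Next, I would separate variables as $-F'(t)F(t)^{-N(p-1)/(N(p-1)+p)}\geq A^{-N(p-1)/(N(p-1)+p)}$ and integrate over $[M/2,M]$; the resulting improper integral $\int_0^{F(M/2)}u^{-N(p-1)/(N(p-1)+p)}\,du$ converges because $N(p-1)/(N(p-1)+p)<1$, and this yields a lower bound for $F(M/2)$. Since $\mu$ is non-increasing, $F(M/2)=\int_{M/2}^M\mu(s)\,ds\leq(M/2)\mu(M/2)$, so dividing by $M/2$ produces
\[
\mu(M/2)\geq\left(\frac{p}{p+N(p-1)}\right)^{(N(p-1)+p)/p}(\mathcal{S}_{N,p}^p/\lambda_q)^{N/p}\cdot 2^{-N(p-1)/p}\cdot M^{N(p-q)/p}.
\]

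The result then follows from the trivial inequality $\|w_q\|_\sigma^\sigma\geq(M/2)^\sigma\mu(M/2)$ after collecting the powers of $M$ and $2$. The argument produces the exponent $(N(p-1)+p)/p=N+1-N/p$ on the factor $p/(p+N(p-1))$, whereas the statement has the cruder $N+1$; since $p/(p+N(p-1))\in(0,1)$ and $(N(p-1)+p)/p<N+1$, replacing the sharper exponent by $N+1$ only weakens the bound, so the stated form of $C_q$ follows. The main technical obstacle is the ODE integration and the careful bookkeeping of exponents; the other ingredients (Sobolev inequality, H\"older's inequality, and the weak form of the equation) are routine.
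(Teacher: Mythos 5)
Your argument is correct and, like the paper's, is a level-set argument built on testing the equation against $(w_q-t)^+$ and combining the Sobolev and H\"older inequalities; the two proofs diverge only in how the resulting differential inequality is set up and exploited. The paper bounds the right-hand side of the tested identity by $\lambda_q\left\Vert w_q\right\Vert_\infty^{q-1}(\left\Vert w_q\right\Vert_\infty-t)\left\vert A_t\right\vert$ and obtains a non-autonomous inequality for $g(t)=\int_{A_t}(w_q-t)\,dx$, which it integrates over $[t,\left\Vert w_q\right\Vert_\infty]$ for \emph{every} level $t$; this yields a pointwise lower bound on $\left\vert A_t\right\vert$ for all $t$, which is then integrated against $\sigma t^{\sigma-1}$ (Cavalieri's principle) and estimated through a Beta-type integral to produce the factor $2^{-(N(p-1)+\sigma p)/p}$. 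You instead keep $F(t)=\left\Vert (w_q-t)^+\right\Vert_1$ on both sides, close the loop into the autonomous inequality $F\le A\,(-F')^{(N(p-1)+p)/(N(p-1))}$, integrate only over $[\left\Vert w_q\right\Vert_\infty/2,\left\Vert w_q\right\Vert_\infty]$, and reach $\left\Vert w_q\right\Vert_\sigma^\sigma$ via the single-level Chebyshev bound $(\left\Vert w_q\right\Vert_\infty/2)^\sigma\,\mu(\left\Vert w_q\right\Vert_\infty/2)$. Your route is slightly shorter and, as you correctly observe, yields the marginally better exponent $(N(p-1)+p)/p=N+1-N/p$ in place of $N+1$ on the factor $p/(p+N(p-1))$, which dominates the stated $C_q$ because the base lies in $(0,1)$; the paper's route has the advantage of delivering a lower bound on $\left\vert A_t\right\vert$ at every level, though that extra information is not needed elsewhere. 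Your exponent bookkeeping checks out (in particular $(1-1/p^\star)\,p/(p-1)=(N(p-1)+p)/(N(p-1))$ and the final collection of powers of $2$ and of $\left\Vert w_q\right\Vert_\infty$), and the improper integral at $F=0$ converges as you say, so the proposal is a valid proof of the proposition.
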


\begin{proof}
Since $w_{q}$ is a positive weak solution of (\ref{diricq}) we have that
\begin{equation}
\int_{\Omega}\left\vert \nabla w_{q}\right\vert ^{q-2}\nabla w_{q}\cdot
\nabla\phi dx=\lambda_{q}\int_{\Omega}w_{q}^{q-1}\phi dx \label{LE}%
\end{equation}
for all test function $\phi\in W_{0}^{1,p}(\Omega)$.

For each $0<t<\left\Vert w_{q}\right\Vert _{\infty}$, define $A_{t}=\left\{
x\in\Omega:w_{q}>t\right\}  $. Since $w_{q}\in C^{1,\alpha}\left(
\overline{\Omega}\right)  $ for some $0<\alpha<1$ it follows that $A_{t}$ is
open and $\nabla(w_{q}-t)^{+}=\nabla w_{q}$ in $A_{t}.$

Thus, the function
\[
\left(  w_{q}-t\right)  ^{+}=\max\left\{  w_{q}-t,0\right\}  =\left\{
\begin{array}
[c]{ll}%
w_{q}-t, & \text{if \ }w_{q}>t\\
0, & \text{if \ }w_{q}\leq t
\end{array}
\right.
\]
belongs to $W_{0}^{1,p}\left(  \Omega\right)  $ and by using it as a test
function in (\ref{LE}) we obtain
\begin{equation}
\int_{A_{t}}\left\vert \nabla w_{q}\right\vert ^{p}dx=\lambda_{q}\int_{A_{t}%
}w_{q}^{q-1}\left(  w_{q}-t\right)  dx\leq\lambda_{q}\left\Vert w_{q}%
\right\Vert _{\infty}^{q-1}\left(  \left\Vert w_{q}\right\Vert _{\infty
}-t\right)  \left\vert A_{t}\right\vert . \label{x1}%
\end{equation}

Now, we estimate $\int_{A_{t}}\left\vert \nabla w_{q}\right\vert ^{p}dx$ from
below. Applying H\"{o}lder and Sobolev inequalities we obtain
\[
\left(  \int_{A_{t}}\left(  w_{q}-t\right)  dx\right)  ^{p}\leq\left(
\int_{A_{t}}\left(  w_{q}-t\right)  ^{p^{\star}}dx\right)  ^{\frac{p}%
{p^{\star}}}\left\vert A_{t}\right\vert ^{p-\frac{p}{p^{\star}}}%
\leq\mathcal{S}_{N,p}^{-p}\left\vert A_{t}\right\vert ^{p-\frac{p}{p^{\star}}%
}\int_{A_{t}}\left\vert \nabla w_{q}\right\vert ^{p}dx
\]
and thus,
\[
\mathcal{S}_{N,p}^{p}\left\vert A_{t}\right\vert ^{\frac{p}{p^{\star}}%
-p}\left(  \int_{A_{t}}\left(  w_{q}-t\right)  dx\right)  ^{p}\leq\int_{A_{t}%
}\left\vert \nabla w_{q}\right\vert ^{p}dx.
\]

By combining this inequality with (\ref{x1}) we obtain%
\[
\mathcal{S}_{N,p}^{p}\left\vert A_{t}\right\vert ^{\frac{p}{p^{\star}}%
-p}\left(  \int_{A_{t}}\left(  w_{q}-t\right)  dx\right)  ^{p}\leq\lambda
_{q}\left\Vert w_{q}\right\Vert _{\infty}^{q-1}(\left\Vert w_{q}\right\Vert
_{\infty}-t)\left\vert A_{t}\right\vert .
\]
Since $\frac{1}{p^{\star}}+\frac{1}{N}=\frac{1}{p}$ the previous inequality
can be rewritten as
\begin{equation}
\left(  \int_{A_{t}}\left(  w_{q}-t\right)  dx\right)  ^{\frac{N}{N+1}}%
\leq\left[  \lambda_{q}\mathcal{S}_{N,p}^{-p}\left\Vert w_{q}\right\Vert
_{\infty}^{q-1}(\left\Vert w_{q}\right\Vert _{\infty}-t)\right]  ^{\frac
{N}{p(N+1)}}\left\vert A_{t}\right\vert . \label{x2}%
\end{equation}

In the sequel we use twice the following Fubini's theorem: if $u\geq0$ is
measurable, $\sigma\geq1,$ and $E_{\tau}=\left\{  x:u(x)>\tau\right\}  ,$ then%
\begin{equation}
\int_{\Omega}u(x)^{\sigma}dx=\sigma\int_{0}^{\infty}\tau^{\sigma-1}\left\vert
E_{\tau}\right\vert d\tau. \label{cavalieri}%
\end{equation}

Let us define $g(t):=\int_{A_{t}}\left(  w_{q}-t\right)  dx$. It follows from
(\ref{cavalieri}) that%
\[
g(t)=\int_{0}^{\infty}\left\vert \left\{  w_{q}-t>\tau\right\}  \right\vert
d\tau=\int_{t}^{\infty}\left\vert \left\{  w_{q}>s\right\}  \right\vert
ds=\int_{t}^{\infty}\left\vert A_{s}\right\vert ds=\int_{t}^{\left\Vert
w_{q}\right\Vert _{\infty}}\left\vert A_{s}\right\vert ds
\]
and therefore $g^{\prime}\left(  t\right)  =-\left\vert A_{t}\right\vert
\leq0.$ Thus, (\ref{x2}) can be written as
\[
\left[  \lambda_{q}\mathcal{S}_{N,p}^{-p}\left\Vert w_{q}\right\Vert _{\infty
}^{q-1}(\left\Vert w_{q}\right\Vert _{\infty}-t)\right]  ^{-\frac{N}{p(N+1)}%
}\leq-g\left(  t\right)  ^{-\frac{N}{N+1}}g^{\prime}\left(  t\right)
\]
and integration over the interval $[t,\left\Vert w_{q}\right\Vert _{\infty}]$
produces%
\begin{equation}
C_{q}\left\Vert w_{q}\right\Vert _{\infty}^{-\frac{N(q-1)}{p}}\left(
\left\Vert w_{q}\right\Vert _{\infty}-t\right)  ^{\frac{N(p-1)+p}{p}}\leq
g\left(  t\right)  \label{x3}%
\end{equation}
where $C_{q}$ is given by (\ref{Cq}).

By using the fact that $g(t)\leq\left(  \left\Vert w_{q}\right\Vert _{\infty
}-t\right)  \left\vert A_{t}\right\vert $ we obtain from (\ref{x3}) that%
\[
C_{q}\left\Vert w_{q}\right\Vert _{\infty}^{-\frac{N(q-1)}{p}}\left(
\left\Vert w_{q}\right\Vert _{\infty}-t\right)  ^{\frac{N(p-1)}{p}}%
\leq\left\vert A_{t}\right\vert .
\]
If $\sigma\geq1$, multiplying the previous inequality by $\sigma t^{\sigma-1}$
and integrating over the interval $[0,\Vert w_{q}\Vert_{\infty}]$, we get%
\begin{equation}
C_{q}\left\Vert w_{q}\right\Vert _{\infty}^{-\frac{N(q-1)}{p}}\sigma\int
_{0}^{\left\Vert w_{q}\right\Vert _{\infty}}\left(  \left\Vert w_{q}%
\right\Vert _{\infty}-t\right)  ^{\frac{N(p-1)}{p}}t^{\sigma-1}dt\leq
\left\Vert w_{q}\right\Vert _{\sigma}^{\sigma} \label{x4}%
\end{equation}
since (\ref{cavalieri}) gives
\[
\left\Vert w_{q}\right\Vert _{\sigma}^{\sigma}=\int_{\Omega}w_{q}^{\sigma
}dx=\sigma\int_{0}^{\left\Vert w_{q}\right\Vert _{\infty}}t^{\sigma
-1}\left\vert A_{t}\right\vert dt.
\]

The change of variable $t=\tau\left\Vert w_{q}\right\Vert _{\infty}$ produces%
\begin{equation}
\int_{0}^{\left\Vert w_{q}\right\Vert _{\infty}}\left(  \!\left\Vert
w_{q}\right\Vert _{\infty}-t\!\right)  ^{\frac{N(p-1)}{p}}t^{\sigma
-1}dt=\left\Vert w_{q}\right\Vert _{\infty}^{\frac{N(p-1)+\sigma p}{p}}%
\int_{0}^{1}\left(  1-\tau\right)  ^{\frac{N(p-1)}{p}}\tau^{\sigma-1}d\tau.
\label{x5}%
\end{equation}
Since we have
\[
\int_{0}^{1}\left(  1-\tau\right)  ^{\frac{N(p-1)}{p}}\tau^{\sigma-1}d\tau
\geq(1/2)^{\frac{N(p-1)}{p}}\int_{0}^{\frac{1}{2}}\tau^{\sigma-1}d\tau
=\frac{2^{-\frac{N(p-1)+\sigma p}{p}}}{\sigma},
\]
combining this with (\ref{x4}) and (\ref{x5}) produces (\ref{estim2}).
\end{proof}

\medskip

We remark from Theorem \ref{T1} that
\begin{equation}
\lim_{q\rightarrow p^{\star}}C_{q}=\left(  \frac{p}{p+N(p-1)}\right)
^{N+1}<1. \label{Cqp*}%
\end{equation}
Thus it follows from the monotonicity of the function $q\mapsto\left\vert
\Omega\right\vert ^{\frac{p}{q}}\lambda_{q}$ that both $C_{q}$ and
$(C_{q})^{-1}$ are bounded.

\begin{corollary}
If $1\leq q\leq p^{\star}-\epsilon,$ then
\begin{equation}
\left\vert \Omega\right\vert ^{-\frac{1}{q}}\leq\left\Vert w_{q}\right\Vert
_{\infty}\leq\mathcal{C}_{\epsilon} \label{linfbounds}%
\end{equation}
where $\mathcal{C}_{\epsilon}$ is a positive constant that depends on
$\epsilon$ but not on $q.$
\end{corollary}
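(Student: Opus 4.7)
The lower bound is immediate from the normalization $\Vert w_{q}\Vert_{q}=1$: since $w_{q}\leq\Vert w_{q}\Vert_{\infty}$ a.e.\ in $\Omega$, we have
$$1=\int_{\Omega}w_{q}^{q}\,dx\leq\Vert w_{q}\Vert_{\infty}^{q}\,|\Omega|,$$
which yields $\Vert w_{q}\Vert_{\infty}\geq|\Omega|^{-1/q}$. No further work is needed for this side.

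For the upper bound, the plan is to apply Proposition \ref{linfty} with $\sigma=q$ and exploit the normalization $\Vert w_{q}\Vert_{q}^{q}=1$ together with uniform control of the constant $C_{q}$ on $[1,p^{\star}-\epsilon]$. Substituting into (\ref{estim2}) gives
$$2^{-\frac{N(p-1)+qp}{p}}\,C_{q}\,\Vert w_{q}\Vert_{\infty}^{\alpha_{q}}\leq 1,\qquad \alpha_{q}:=\frac{N(p-q)+qp}{p}=\frac{Np-q(N-p)}{p}.$$
Since $q\leq p^{\star}-\epsilon=\frac{Np}{N-p}-\epsilon$, we obtain $q(N-p)\leq Np-\epsilon(N-p)$, so $\alpha_{q}\geq \frac{\epsilon(N-p)}{p}>0$. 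Thus the exponent on $\Vert w_{q}\Vert_{\infty}$ is bounded below by a positive constant depending only on $\epsilon$, $N$, $p$, which is the key reason the estimate can be inverted.

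It remains to bound $C_{q}$ from below on $[1,p^{\star}-\epsilon]$, which amounts to bounding $\lambda_{q}$ from above. Here I would invoke Proposition \ref{monot}: since $q\mapsto|\Omega|^{p/q}\lambda_{q}$ is strictly decreasing,
$$\lambda_{q}\leq|\Omega|^{p(1-\frac{1}{q})}\lambda_{1}\leq \max\bigl(1,|\Omega|^{p(1-1/(p^{\star}-\epsilon))}\bigr)\lambda_{1}=:M_{\epsilon}$$
for all $q\in[1,p^{\star}-\epsilon]$. Consequently
$$C_{q}\geq\left(\tfrac{p}{p+N(p-1)}\right)^{N+1}\left(\tfrac{\mathcal{S}_{N,p}^{p}}{M_{\epsilon}}\right)^{N/p}=:c_{\epsilon}>0.$$
Combining everything, $\Vert w_{q}\Vert_{\infty}^{\alpha_{q}}\leq c_{\epsilon}^{-1}2^{(N(p-1)+qp)/p}$, and the right-hand side is bounded above by a constant $K_{\epsilon}$ for $q\in[1,p^{\star}-\epsilon]$. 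Taking $\alpha_{q}$-th roots and using $\alpha_{q}\geq\epsilon(N-p)/p$ yields $\Vert w_{q}\Vert_{\infty}\leq\max\bigl(K_{\epsilon},\,K_{\epsilon}^{p/(\epsilon(N-p))}\bigr)=:\mathcal{C}_{\epsilon}$.

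The only subtlety — and the step I would take most care with — is ensuring that the exponent $\alpha_{q}$ stays uniformly bounded away from zero; this is precisely where the hypothesis $q\leq p^{\star}-\epsilon$ is used and why the bound must blow up as $\epsilon\to 0^{+}$ (in agreement with the failure of compactness at the critical exponent).
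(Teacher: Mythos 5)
Your proposal is correct and follows essentially the same route as the paper: both arguments rest on Proposition \ref{linfty} combined with the normalization $\left\Vert w_{q}\right\Vert _{q}=1$ and a uniform positive lower bound on $C_{q}$ obtained from the monotonicity of $q\mapsto\left\vert \Omega\right\vert ^{p/q}\lambda_{q}$. The only difference is cosmetic: you take $\sigma=q$ uniformly, noting that the exponent $\alpha_{q}=\frac{N(p-q)+qp}{p}=\frac{(N-p)(p^{\star}-q)}{p}$ stays bounded away from zero on all of $[1,p^{\star}-\epsilon]$, whereas the paper splits at $q=p$ and uses $\sigma=1$ on $[1,p]$; your unified choice slightly streamlines the case analysis without changing the substance.
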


\begin{proof}
The first inequality is trivial, since $\Vert w_{q}\Vert_{q}=1.$ Let us
suppose that $1\leq q\leq p.$ It follows from (\ref{estim2}) with $\sigma=1$
that%
\[
2^{-\frac{N(p-1)+p}{p}}C_{q}\left\Vert w_{q}\right\Vert _{\infty}%
^{\frac{N(p-q)+p}{p}}\leq\left\Vert w_{q}\right\Vert _{1}\leq\left\vert
\Omega\right\vert ^{\frac{q-1}{q}}\left\Vert w_{q}\right\Vert _{q}=\left\vert
\Omega\right\vert ^{\frac{q-1}{q}}.
\]
Thus,%
\[
\left\Vert w_{q}\right\Vert _{\infty}\leq\widetilde{\mathcal{A}}:=\max_{1\leq
q\leq p}\left(  \frac{2^{\frac{N(p-1)+p}{p}}\left\vert \Omega\right\vert
^{\frac{q-1}{q}}}{C_{q}}\right)  ^{\frac{p}{p+N(p-q)}}.
\]

Now, let us consider $p\leq q\leq p^{\star}-\epsilon.$ Then, by making
$\sigma=q$ in (\ref{estim2}) we obtain
\[
2^{-\frac{N(p-1)+qp}{p}}C_{q}\left\Vert w_{q}\right\Vert _{\infty}%
^{\frac{N(p-q)+qp}{p}}\leq\left\Vert w_{q}\right\Vert _{q}^{q}=1.
\]
that is,
\[
\left\Vert w_{q}\right\Vert _{\infty}\leq\widetilde{\mathcal{B}}_{\epsilon
}:=\max_{p\leq q\leq p^{\star}-\epsilon}\left[  \frac{2^{\frac{N(p-1)+qp}{p}}%
}{C_{q}}\right]  ^{\frac{p}{(N-p)(p^{\star}-q)}},
\]
since $N(p-q)+qp=(N-p)(p^{\star}-q).$

Therefore, $\ \left\Vert w_{q}\right\Vert _{\infty}\leq\max\left\{
\widetilde{\mathcal{A}},\widetilde{\mathcal{B}}_{\epsilon}\right\}
:=\mathcal{C}_{\epsilon}.$
\end{proof}

Note from (\ref{Cqp*}) that $\widetilde{\mathcal{B}}_{\epsilon}\rightarrow
\infty$ as $\epsilon\rightarrow0^{+}.$

\section{Absolute continuity}

In this section we prove our main result: the absolute continuity of the
function $q\mapsto\lambda_{q}$ in the closed interval $[1,p^{\star}].$ For
this we first prove the Lipschitz continuity of the function $q\mapsto
\left\vert \Omega\right\vert ^{\frac{p}{q}}\lambda_{q}$ in each close interval
of the form $[1,p^{\star}-\epsilon].$ Obviously, this is equivalent to the
Lipschitz continuity of the function $q\mapsto\lambda_{q}$ in same interval.

\begin{theorem}
\label{main}For each $\epsilon>0$, there exists a positive constant
$\mathcal{L}_{\epsilon}$ such that%
\[
\left\vert \left\vert \Omega\right\vert ^{\frac{p}{s}}\lambda_{s}-\left\vert
\Omega\right\vert ^{\frac{p}{q}}\lambda_{q}\right\vert \leq\mathcal{L}%
_{\epsilon}\left\vert s-q\right\vert
\]
for all $s,q\in\lbrack1,p^{\star}-\epsilon].$
\end{theorem}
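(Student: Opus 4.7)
The plan is to apply Lemma \ref{formula} with $u = w_q$ (the $L^q$-normalized positive extremal for the Rayleigh quotient) and to control the resulting exponential factor uniformly over $q,s \in [1,p^\star - \epsilon]$ using the $L^\infty$-bound (\ref{linfbounds}). Fix $1 \le s < q \le p^\star - \epsilon$. Since $w_q \in W_0^{1,p}(\Omega) \cap C^{1,\alpha}(\overline{\Omega}) \subset L^\infty(\Omega)$, Lemma \ref{formula} gives
$$
|\Omega|^{p/s}\, \mathcal{R}_s(w_q) \;=\; |\Omega|^{p/q}\, \lambda_q \, \exp\!\left(p\int_s^q \frac{K(t,w_q)}{t^2}\, dt\right).
$$
Since $\mathcal{R}_s(w_q) \ge \lambda_s$ and (by Proposition \ref{monot}) $|\Omega|^{p/s}\lambda_s \ge |\Omega|^{p/q}\lambda_q$, I would immediately deduce
$$
0 \;\le\; |\Omega|^{p/s} \lambda_s - |\Omega|^{p/q} \lambda_q \;\le\; |\Omega|^{p/q} \lambda_q \left[\exp\!\left(p\int_s^q \frac{K(t,w_q)}{t^2}\, dt\right) - 1\right],
$$
so the whole question reduces to producing a uniform upper bound on $K(t, w_q)$.

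The core step is to show $K(t, w_q) \le M_\epsilon$ for all $t \in [1, p^\star - \epsilon]$ and all $q \in [1, p^\star - \epsilon]$, with $M_\epsilon$ depending only on $\epsilon,\Omega,p,N$. Set $\widetilde{\mathcal{C}}_\epsilon := \max\{\mathcal{C}_\epsilon, 1\}$, where $\mathcal{C}_\epsilon$ is the constant from (\ref{linfbounds}). For the first summand of $K(t, w_q)$, discarding the nonpositive contribution from $\{w_q < 1\}$ gives
$$
\frac{\int_\Omega w_q^{\, t}\, \ln w_q^{\, t}\, dx}{\|w_q\|_t^t} \;\le\; \frac{t\,(\ln \widetilde{\mathcal{C}}_\epsilon)\int_{\{w_q \ge 1\}} w_q^{\, t}\, dx}{\|w_q\|_t^t} \;\le\; t\, \ln \widetilde{\mathcal{C}}_\epsilon .
$$
For the second summand, interpolation $\|w_q\|_q^q \le \|w_q\|_t^{\, t}\, \|w_q\|_\infty^{q-t}$ combined with $\|w_q\|_q = 1$ and $t \le q$ yields $\|w_q\|_t^{\, t} \ge \widetilde{\mathcal{C}}_\epsilon^{-(q-t)}$, hence $\ln\!\bigl(|\Omega|\, \|w_q\|_t^{-t}\bigr) \le (q-t)\ln \widetilde{\mathcal{C}}_\epsilon + \ln |\Omega|$. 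Summing and using $q \le p^\star$ gives $K(t, w_q) \le p^\star \ln \widetilde{\mathcal{C}}_\epsilon + \max\{\ln|\Omega|, 0\} =: M_\epsilon$.

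To close the argument I would use $t \ge 1$ to get $\int_s^q K(t, w_q)/t^2\, dt \le M_\epsilon (q - s)$; then the elementary estimate $e^x - 1 \le e^{x_0}\, x$ on $[0, x_0]$ with $x_0 := p M_\epsilon (p^\star - 1)$; and finally Proposition \ref{monot} to bound $|\Omega|^{p/q}\lambda_q \le |\Omega|^p \lambda_1$ throughout the interval. Combining these three ingredients delivers
$$
\bigl| |\Omega|^{p/s} \lambda_s - |\Omega|^{p/q} \lambda_q \bigr| \;\le\; \mathcal{L}_\epsilon\, |s - q|, \qquad \mathcal{L}_\epsilon := p\, M_\epsilon\, |\Omega|^p\, \lambda_1\, e^{p M_\epsilon (p^\star - 1)} .
$$

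The main obstacle, and precisely the place where the cutoff $\epsilon > 0$ matters, is the uniform $L^\infty$-bound on $w_q$: as the remark following the Corollary points out, $\mathcal{C}_\epsilon \to \infty$ when $\epsilon \to 0^+$, so this strategy genuinely degenerates at the critical exponent. Absolute continuity on the full interval $[1, p^\star]$ is then recovered (not in this Theorem, but in the subsequent argument) by combining this Lipschitz estimate on each $[1, p^\star - \epsilon]$ with the bounded variation (Corollary \ref{bv}) and the left-continuity at $p^\star$ (Theorem \ref{T1}), via the standard characterization of absolute continuity by bounded variation together with continuity at the endpoint.
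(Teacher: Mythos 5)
Your proof is correct, and its skeleton is the one the paper uses: apply Lemma \ref{formula} with $u=w_q$, reduce everything to a uniform bound on $K(t,w_q)$ over $s\le t\le q\le p^{\star}-\epsilon$, and then convert the exponential factor into a linear one. Where you genuinely diverge is in how that uniform bound is obtained. The paper estimates $K(t,w_q)\le t\ln\bigl(|\Omega|^{1/t}\Vert w_q\Vert_{\infty}/\Vert w_q\Vert_{t}\bigr)$ and then controls the ratio $|\Omega|^{1/t}\Vert w_q\Vert_{\infty}/\Vert w_q\Vert_{t}$ by invoking the level-set estimate (\ref{estim2}) a second time (with $\sigma=1$, combined with H\"older's inequality $\Vert w_q\Vert_1\le|\Omega|^{1-1/t}\Vert w_q\Vert_t$), which forces a case split at $q=p$ exactly parallel to the one in the proof of (\ref{linfbounds}). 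You instead bound the two summands of $K(t,w_q)$ separately, using only the $L^{\infty}$ bound $\Vert w_q\Vert_{\infty}\le\mathcal{C}_{\epsilon}$ as a black box: the entropy term by discarding the set $\{w_q<1\}$, and the term $\ln(|\Omega|\Vert w_q\Vert_t^{-t})$ via the interpolation $1=\Vert w_q\Vert_q^q\le\Vert w_q\Vert_t^{t}\Vert w_q\Vert_{\infty}^{q-t}$, which exploits the normalization $\Vert w_q\Vert_q=1$ and needs only $t\le q$. This avoids re-using (\ref{estim2}) and the case split, and is the cleaner route to the same constant-in-$q$ bound. Your endgame (the elementary inequality $e^{x}-1\le e^{x_0}x$ together with $\int_s^q t^{-2}\,dt\le q-s$) replaces the paper's boundedness argument for $H(\xi)=(\xi^{p\mathcal{D}_{\epsilon}}-1)/(\xi-1)$ and has the mild advantage of producing a fully explicit $\mathcal{L}_{\epsilon}$. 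Both versions degenerate as $\epsilon\to0^{+}$ for the reason you identify.
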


\begin{proof}
Without loss of generality let us suppose that $s<q.$ Thus, the monotonicity
of the function $\tau\mapsto\left\vert \Omega\right\vert ^{\frac{p}{\tau}%
}\lambda_{\tau}$ implies%
\[
\left\vert \left\vert \Omega\right\vert ^{\frac{p}{s}}\lambda_{s}-\left\vert
\Omega\right\vert ^{\frac{p}{q}}\lambda_{q}\right\vert =\left\vert
\Omega\right\vert ^{\frac{p}{s}}\lambda_{s}-\left\vert \Omega\right\vert
^{\frac{p}{q}}\lambda_{q}.
\]

Take $t\in\mathbb{R}$ so that $s\leq t\leq q.$ It follows from (\ref{estim2})
with $\sigma=1$ that%
\[
2^{-\frac{N(p-1)+p}{p}}C_{q}\left\Vert w_{q}\right\Vert _{\infty}%
^{1+\frac{N(p-q)}{p}}\leq\left\Vert w_{q}\right\Vert _{1}\leq\left\vert
\Omega\right\vert ^{1-\frac{1}{t}}\left\Vert w_{q}\right\Vert _{t}%
\]
and therefore%
\[
\frac{\left\vert \Omega\right\vert ^{\frac{1}{t}}\left\Vert w_{q}\right\Vert
_{\infty}}{\left\Vert w_{q}\right\Vert _{t}}\leq\frac{2^{\frac{N(p-1)+p}{p}%
}\left\vert \Omega\right\vert }{C_{q}\left\Vert w_{q}\right\Vert _{\infty
}^{\frac{N(p-q)}{p}}}.
\]

Hence, for $1\leq q\leq p$ the first inequality in (\ref{linfbounds}) gives%
\[
\frac{\left\vert \Omega\right\vert ^{\frac{1}{t}}\left\Vert w_{q}\right\Vert
_{\infty}}{\left\Vert w_{q}\right\Vert _{t}}\leq\frac{2^{\frac{N(p-1)+p}{p}%
}\left\vert \Omega\right\vert }{C_{q}\left\Vert w_{q}\right\Vert _{\infty
}^{\frac{N(p-q)}{p}}}\leq\mathcal{A}:=2^{\frac{N(p-1)+p}{p}}\max_{1\leq q\leq
p}\frac{\left\vert \Omega\right\vert ^{1+\frac{N(p-q)}{pq}}}{C_{q}}%
\]
while for $p\leq q\leq p^{\star}-\epsilon$ the second inequality in
(\ref{linfbounds}) gives%
\[
\frac{\left\vert \Omega\right\vert ^{\frac{1}{t}}\left\Vert w_{q}\right\Vert
_{\infty}}{\left\Vert w_{q}\right\Vert _{t}}\leq\frac{2^{\frac{N(p-1)+p}{p}%
}\left\vert \Omega\right\vert }{C_{q}}\left\Vert w_{q}\right\Vert _{\infty
}^{\frac{N(q-p)}{p}}\leq\mathcal{B}_{\epsilon}:=2^{\frac{N(p-1)+p}{p}%
}\left\vert \Omega\right\vert \max_{p\leq q\leq p^{\star}-\epsilon}%
\frac{\mathcal{C}_{\epsilon}^{\frac{N(q-p)}{p}}}{C_{q}}.
\]
Therefore,
\begin{equation}
\frac{\left\vert \Omega\right\vert ^{\frac{1}{t}}\left\Vert w_{q}\right\Vert
_{\infty}}{\left\Vert w_{q}\right\Vert _{t}}\leq\mathcal{D}_{\epsilon}%
:=\max\left\{  \mathcal{A},\mathcal{B}_{\epsilon}\right\}  . \label{Depsilon}%
\end{equation}

Thus,
\begin{align*}
K(t,w_{q})  &  =\frac{t\int_{\Omega}\left\vert w_{q}\right\vert ^{t}%
\ln\left\vert w_{q}\right\vert dx}{\left\Vert w_{q}\right\Vert _{t}^{t}}%
+t\ln\left(  \frac{\left\vert \Omega\right\vert ^{\frac{1}{t}}}{\left\Vert
w_{q}\right\Vert _{t}}\right) \\
&  \leq\frac{t(\ln\left\Vert w_{q}\right\Vert _{\infty})\int_{\Omega
}\left\vert w_{q}\right\vert ^{t}dx}{\left\Vert w_{q}\right\Vert _{t}^{t}%
}+t\ln\left(  \frac{\left\vert \Omega\right\vert ^{\frac{1}{t}}}{\left\Vert
w_{q}\right\Vert _{t}}\right)  =t\ln\left(  \frac{\left\vert \Omega\right\vert
^{\frac{1}{t}}\left\Vert w_{q}\right\Vert _{\infty}}{\left\Vert w_{q}%
\right\Vert _{t}}\right)  \leq t\mathcal{D}_{\epsilon}%
\end{align*}
and we obtain
\[
\exp\left(  p\int_{s}^{q}\frac{K(t,w_{q})}{t^{2}}dt\right)  \leq\exp\left(
p\mathcal{D}_{\epsilon}\int_{s}^{q}\frac{dt}{t}\right)  =\left(  \frac{q}%
{s}\right)  ^{p\mathcal{D}_{\epsilon}}.
\]

But
\begin{align*}
\left\vert \Omega\right\vert ^{\frac{p}{s}}\lambda_{s}  &  \leq\left\vert
\Omega\right\vert ^{\frac{p}{s}}\mathcal{R}(w_{q})\\
&  =\left\vert \Omega\right\vert ^{\frac{p}{q}}\mathcal{R}(w_{q})\exp\left(
p\int_{s}^{q}\frac{K(t,w_{q})}{t^{2}}dt\right)  =\left\vert \Omega\right\vert
^{\frac{p}{q}}\lambda_{q}\exp\left(  p\int_{s}^{q}\frac{K(t,w_{q})}{t^{2}%
}dt\right)
\end{align*}
yields
\begin{align*}
\left\vert \Omega\right\vert ^{\frac{p}{s}}\lambda_{s}-\left\vert
\Omega\right\vert ^{\frac{p}{q}}\lambda_{q}  &  \leq\left\vert \Omega
\right\vert ^{\frac{p}{q}}\lambda_{q}\left[  \exp\left(  p\int_{s}^{q}%
\frac{K(t,w_{q})}{t^{2}}dt\right)  -1\right] \\
&  \leq\left\vert \Omega\right\vert ^{p}\lambda_{1}\left[  \exp\left(
p\int_{s}^{q}\frac{K(t,w_{q})}{t^{2}}dt\right)  -1\right]  \leq\left\vert
\Omega\right\vert ^{p}\lambda_{1}\left[  \left(  \frac{q}{s}\right)
^{p\mathcal{D}_{\epsilon}}-1\right]  .
\end{align*}

Therefore,%
\[
0<\frac{\left\vert \Omega\right\vert ^{\frac{p}{s}}\lambda_{s}-\left\vert
\Omega\right\vert ^{\frac{p}{q}}\lambda_{q}}{q-s}\leq\frac{\left\vert
\Omega\right\vert ^{p}\lambda_{1}}{s}\frac{\left(  \frac{q}{s}\right)
^{p\mathcal{D}_{\epsilon}}-1}{\frac{q}{s}-1}\leq\left\vert \Omega\right\vert
^{p}\lambda_{1}H(\frac{q}{s})
\]
where
\[
H(\xi)=\frac{\xi^{p\mathcal{D}_{\epsilon}}-1}{\xi-1};\ \ \ \ \ \ \ \ \ \ 1\leq
\xi\leq p^{\star}-\epsilon.
\]

Since $\lim\limits_{\xi\rightarrow1^{+}}H(\xi)=p\mathcal{D}_{\epsilon}$, we
conclude that $H$ is bounded in $[1,p^{\star}-\epsilon]$ and thus%
\[
\frac{\exp\left(  p\int_{s}^{q}\frac{K(t,w_{q})}{t^{2}}dt\right)  -1}{q-s}%
\leq\mathcal{L}_{\epsilon}:=\max_{1\leq\xi\leq p^{\star}-\epsilon}H(\xi).
\]
Hence, for $1\leq s<q\leq p^{\star}-\epsilon$ we have
\[
\left\vert \left\vert \Omega\right\vert ^{\frac{p}{s}}\lambda_{s}-\left\vert
\Omega\right\vert ^{\frac{p}{q}}\lambda_{q}\right\vert =\left\vert
\Omega\right\vert ^{\frac{p}{s}}\lambda_{s}-\left\vert \Omega\right\vert
^{\frac{p}{q}}\lambda_{q}\leq\mathcal{L}_{\epsilon}\left(  q-s\right)
=\mathcal{L}_{\epsilon}\left\vert s-q\right\vert .\vspace*{-0.7cm}%
\]

\end{proof}

\begin{theorem}
The function $q\mapsto\lambda_{q}$ is absolutely continuous in $\left[
1,p^{\star}\right]  .$
\end{theorem}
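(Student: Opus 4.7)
The plan is to combine the three ingredients already established in the paper: the monotonicity of $\Lambda(q) := \left\vert \Omega \right\vert^{\frac{p}{q}} \lambda_{q}$ (Proposition \ref{monot}), its Lipschitz continuity on each subinterval $[1, p^{\star} - \epsilon]$ (Theorem \ref{main}), and its left-continuity at $p^{\star}$ (a consequence of Theorem \ref{T1} with $q = p^{\star}$ together with the continuity of $q \mapsto \left\vert \Omega \right\vert^{\frac{p}{q}}$).

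First I would verify that $\Lambda$ itself is absolutely continuous on $[1, p^{\star}]$ by a standard $\varepsilon$--$\delta$ argument. Given $\varepsilon > 0$, use left-continuity at $p^{\star}$ to select $\epsilon > 0$ so small that $0 \le \Lambda(p^{\star} - \epsilon) - \Lambda(p^{\star}) < \varepsilon/2$. On the complementary interval $[1, p^{\star} - \epsilon]$ Theorem \ref{main} supplies a Lipschitz constant $\mathcal{L}_{\epsilon}$, and I would set $\delta := \varepsilon / (2\mathcal{L}_{\epsilon})$. For any finite pairwise disjoint collection $\{(a_{i}, b_{i})\}$ in $[1, p^{\star}]$ with $\sum (b_{i} - a_{i}) < \delta$, split each $(a_{i}, b_{i})$ at $p^{\star} - \epsilon$ when it straddles that point. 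The Lipschitz bound controls the total oscillation of $\Lambda$ across the subintervals lying in $[1, p^{\star} - \epsilon]$ by $\mathcal{L}_{\epsilon}\delta = \varepsilon/2$, while the monotonicity of $\Lambda$ makes the total oscillation across the subintervals lying in $[p^{\star} - \epsilon, p^{\star}]$ telescope to at most $\Lambda(p^{\star} - \epsilon) - \Lambda(p^{\star}) < \varepsilon/2$.

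To close, I would transfer absolute continuity from $\Lambda$ to $\lambda_{q}$. The factor $q \mapsto \left\vert \Omega \right\vert^{-\frac{p}{q}}$ is smooth and strictly positive on the compact interval $[1, p^{\star}]$, hence Lipschitz and in particular absolutely continuous. Since the product of two bounded absolutely continuous functions on a compact interval is absolutely continuous, the identity $\lambda_{q} = \left\vert \Omega \right\vert^{-\frac{p}{q}} \Lambda(q)$ transfers absolute continuity to $q \mapsto \lambda_{q}$.

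The main obstacle is the critical endpoint $q = p^{\star}$, where Theorem \ref{main} degenerates: its Lipschitz constant blows up as $\epsilon \to 0^{+}$, in accordance with the remark that $\widetilde{\mathcal{B}}_{\epsilon} \to \infty$. So no uniform Lipschitz bound can be expected on the full interval, and the crux is to use the monotonicity of $\Lambda$ together with one-sided continuity at $p^{\star}$ as a substitute that still yields absolute continuity. Everything else reduces to well-known facts about monotone and absolutely continuous functions.
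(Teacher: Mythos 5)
Your proof is correct, and it takes a genuinely different route from the paper's. The paper argues via the integral representation: it invokes the bounded variation of $q\mapsto\lambda_{q}$ (Corollary \ref{bv}) to get that $(\lambda_{q})'$ exists a.e.\ and is Lebesgue integrable, uses the Lipschitz continuity on $[1,p^{\star}-\epsilon]$ to write $\lambda_{q}=\lambda_{1}+\int_{1}^{q}(\lambda_{s})'\,ds$ for every $q<p^{\star}$, and then combines the left-continuity at $p^{\star}$ with the absolute continuity of the Lebesgue integral (phrased via dominated convergence) to extend this identity to $q=p^{\star}$; absolute continuity then follows because $\lambda_{q}$ is an indefinite integral of an integrable function. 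You instead verify the $\varepsilon$--$\delta$ definition directly for $\Lambda(q)=\vert\Omega\vert^{p/q}\lambda_{q}$, splitting any disjoint family of intervals at $p^{\star}-\epsilon$: the Lipschitz constant $\mathcal{L}_{\epsilon}$ handles the left portion, while the strict monotonicity of $\Lambda$ (Proposition \ref{monot}) makes the oscillation over the right portion telescope to at most $\Lambda(p^{\star}-\epsilon)-\Lambda(p^{\star})$, which left-continuity makes small. Both proofs rest on exactly the same three ingredients, but yours is more elementary, since it bypasses the a.e.\ differentiability of BV functions and the identification of the function with the integral of its derivative; the paper's version, in exchange, delivers the explicit representation $\lambda_{q}=\lambda_{1}+\int_{1}^{q}(\lambda_{s})'\,ds$ on all of $[1,p^{\star}]$ as a by-product. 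Your final transfer step from $\Lambda$ to $\lambda_{q}$ via the smooth positive factor $\vert\Omega\vert^{-p/q}$ is sound, and you correctly identify the blow-up of $\mathcal{L}_{\epsilon}$ as $\epsilon\to0^{+}$ as the reason a single Lipschitz bound on $[1,p^{\star}]$ is unavailable.
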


\begin{proof}
According to Corollary \ref{bv} the function $q\mapsto\lambda_{q}$ is of
bounded variation. Therefore, its derivative $(\lambda_{q})^{\prime}$ exists
almost everywhere in $\left[  1,p^{\star}\right]  $ and it is Lebesgue
integrable in this interval. Thus, Lebesgue's dominated convergence theorem
implies that
\begin{equation}
\lim_{q\rightarrow p^{\star}}\int_{1}^{q}(\lambda_{s})^{\prime}ds=\int
_{1}^{p^{\star}}(\lambda_{s})^{\prime}ds. \label{dominated}%
\end{equation}

On the other hand, since Lipschtiz continuity implies absolute continuity it
follows from Theorem \ref{main} that $\lambda_{q}$ is absolutely continuous in
each interval of the form $[1,p^{\star}-\epsilon].$ Therefore,
\begin{equation}
\lambda_{q}=\lambda_{1}+\int_{1}^{q}(\lambda_{s})^{\prime}ds,\text{ \ for
}1\leq q<p^{\star}. \label{integ}%
\end{equation}
Hence, the left-continuity (\ref{leftp*}) combined with (\ref{dominated})
imply that (\ref{integ}) is also valid for $q=p^{\star}.$ We have concluded
that $\lambda_{q}$ is the indefinite integral of a Lebesgue integrable (its
derivative) function what guarantees that $\lambda_{q}$ is absolutely continuous.
\end{proof}

\end{document}